\documentclass{scrartcl}

\usepackage{graphicx}%
\usepackage{multirow}%
\usepackage{amsmath,amssymb,amsfonts}%
\usepackage{amsthm}%
\usepackage{mathrsfs}%
\usepackage[title]{appendix}%
\usepackage{xcolor}%
\usepackage{textcomp}%
\usepackage{manyfoot}%
\usepackage{booktabs}%
\usepackage{algorithm}%
\usepackage{algorithmicx}%
\usepackage{algpseudocode}%
\usepackage{listings}%
\RequirePackage{newunicodechar}
\RequirePackage{enumitem}
\RequirePackage{hyperref}
\RequirePackage{cleveref}
\RequirePackage{mathtools}

\newcommand{\ZZ}{\mathbb{Z}}

\newcommand{\NN}{\mathbb{N}}
\newcommand{\CC}{\mathbb{C}}
\newcommand{\RR}{\mathbb{R}}

\newcommand{\E}{\mathbb{E}}
\renewcommand{\P}{\mathbb{P}}
\newcommand{\Pl}[2][]{\P_{#1}\left(#2\right)}
\newcommand{\El}[2][]{\E_{#1}\left[#2\right]}
\DeclarePairedDelimiterXPP\Pk[1]{\P}{(}{)}{}{
  \providecommand\given{\nonscript\:\delimsize\vert\nonscript\:\mathopen{}}  
#1}
\DeclarePairedDelimiterXPP\Ek[1]{\E}[]{}{
  \providecommand\given{\nonscript\:\delimsize\vert\nonscript\:\mathopen{}}  
#1}


\newcommand{\eins}{\mathbf{1}}



\newcommand{\eps}{\varepsilon}
\newcommand{\fracc}[2]{{\left(\frac{#1}{#2}\right)}}

\newcommand\surject\twoheadrightarrow

\newcommand\del\partial
\newcommand\ohne\backslash
\newcommand\inv{^{-1}}

\DeclareMathOperator{\Var}{Var}
\DeclareMathOperator{\Cov}{Cov}

\renewcommand\l\left
\renewcommand\r\right
\newcommand\minv\wedge
\newcommand\maxv\vee

\DeclarePairedDelimiter\pars{(}{)}
\DeclarePairedDelimiter\kl{(}{)}
\DeclarePairedDelimiterX{\absnorm}[1]{\lVert}{\rVert_\infty}{#1}

\DeclarePairedDelimiter{\abs}{\lvert}{\rvert}

\DeclarePairedDelimiter\mg{\{}{\}}



\newcommand\Ksi\Xi

\newunicodechar{≤}{\le}
\newunicodechar{≥}{\ge}
\newunicodechar{µ}\mu
\newunicodechar{²}{^2}
\newunicodechar{ł}{\lambda}
\newunicodechar{−}{-}
\newunicodechar{∂}{\partial}
\newunicodechar{¯}{\overline}
\newunicodechar{∀}{\forall}
\newunicodechar{α}{\alpha}

\newcommand\poisv[1]{\widetilde{#1}}
\newcommand\patv[1]{\widehat{#1}}
\newcommand\triecal[1]{\mathcal{#1}} 
\newcommand\Alphabet{\mathcal A}
\newcommand\strings{\mathcal A^\ast}
\newcommand\trees{\mathfrak T}
\newcommand\triel[1][]{\poisv{\triecal T}_{#1\lambda}}

\newcommand\ttrien{\poisv{\triecal T}_{n}}
\newcommand\trien{\triecal T_n}
\newcommand\triek[1][k]{\triecal T_{#1}}
\newcommand\ptrien{\triecal P_n} 
\newcommand\ptriel{\poisv{\triecal P}_ł}
\newcommand\ptrie[1]{{\triecal P}_{#1}}
\newcommand\ptriek{\triecal P_k}

\newcommand\tphi{\patv\varphi}
\newcommand\tPhi{\patv\Phi}
\newcommand\dto{\overset d\longrightarrow}

\newcommand\dapprox{\overset d\approx}

\newcommand\hs{J}
\newcommand\nl[1][]{\nonumber\\&\phantom{#1}}

\DeclareMathOperator\pat{pat}
\DeclareMathOperator\lcp{lcp}


\newtheorem{theorem}{Theorem}
\newtheorem{proposition}[theorem]{Proposition}%
\newtheorem{lemma}[theorem]{Lemma}

\theoremstyle{remark}%
\newtheorem{remark}[theorem]{Remark}%

\theoremstyle{definition}
%


\title{Central Limit Theorems for Additive Functionals of Patricia Tries} 

\author{Jasper Ischebeck\thanks{Uppsala University, Sweden\\ \texttt{jasper.ischebeck@uu.se}}}

\begin{document}

\maketitle

\begin{abstract}
General additive functionals of patricia tries are studied asymptotically in a probabilistic model with independent, identically distributed letters from a finite alphabet. Asymptotic normality is shown after normalization together with asymptotic expansions of the moments.  There are two regimes depending on the algebraic structure of the letter probabilities, with and without oscillations in the expansion of moments.

As applications firstly the proportion of fringe trees of patricia tries with $k$ keys is studied, which is oscillating around $(1-\rho(k))/(2H)k(k-1)$ where $H$ denotes the source entropy and $\rho(k)$ is exponentially decreasing. The oscillations are identified explicitly. Secondly, the independence number of patricia tries and of tries is considered. The general results for additive functions also apply, where a leading constant is numerically approximated.

The results extend work of Janson \cite{janson} on tries by relating additive functionals on patricia tries to additive functionals on tries.
\end{abstract}


\section{Introduction}
Patricia tries are trees used in Computer Science as data structures to facilitate searching and sorting strings and in compression \cite{knuth}.
Many properties that relate to the performance to algorithms on patricia tries can be described by \textit{additive functionals}, such as path length and size. These are functions on trees that can be written as sums over all nodes (see \eqref{defi:phit} later).
These properties are typically studied in settings where the strings used to build the patricia tries arise from stochastic models.
We fix a finite alphabet $\Alphabet$ and a distribution $p$ on
$\Alphabet$ and sample infinite strings with independently identically distributed (i.i.d.) letters according to
$p$. For $a\in \Alphabet$, we write $p_a$ for the point mass of $p$ at $a$.
To avoid trivialities, we require $0 < p_a < 1$ for all $a\in\Alphabet$.

In this model, we show that when the number of strings in the patricia trie go to infinity, the values of additive functionals converge in distribution and with all moments against a normal distributed variable. The mean and variance are known to oscillate according to the algebraic structure of the letter probabilities $p_a$ and a first-order expansion including oscillations in terms of the contribution of each node is given.
For this, we require that the contribution of each node to the additive functional is sublinear in the size of its subtree.

Some important properties of patricia tries and the related tries are included in Knuth \cite{knuth} and since then, many other properties have been studied. By studying those properties one by one, one can get precise asymptotic expansions, like in
\cite{szpankowski:patrica_tries,jacquet_szpankowski_2015, bourdon, vallee}. Methods and results applying to multiple properties at once have been developed by Hwang, Fuchs and Zacharovas \cite{hwang:trievar} using analytic methods and recently by Janson \cite{janson} using more probabilistic methods.
Asymptotic results for small additive functionals have been found for many other random tree models, e.g.\ \cite{wagner:additive,janson:gw-fringe} for simply generated trees / conditioned Galton-Watson-trees, \cite{wagner:additive,Aldous1991AsymptoticFD} for recursive trees and binary search trees and \cite{janson} for tries, as mentioned before. See \cite[Section 3.1.2]{drmota:rt} for an overview.

After introducing patricia tries and tries, we show that there is a pullback of additive functionals on patricia tries to additive functionals on tries. Using this pullback, we show our main theorem, \Cref{zgs} by applying Theorem~3.9 from Janson \cite{janson} to this pullback. 
Then, we apply this theorem to the distribution of fringe trees, calculating the mean and variance of the number of fringe trees of a certain size. As a second application, we consider the independence number of the patricia trie and exemplify how to approximate mean and variance if they cannot be easily derived from the additive functional.

\section{Preliminaries}

The preliminaries are largely the same as for \cite{janson}, so we will try to keep this short and refer readers to \cite{janson} for a more detailed description.
For our fixed, finite alphabet $\Alphabet$ we define $\strings := \bigcup_{n=0}^\infty
\Alphabet^n$ as the set of finite strings over $\Alphabet$. The elements of strings are called \textit{characters}.
We write $\eps$ for the empty string and $\alpha\beta$ for the concatenation of strings $\alpha,\beta \in \strings$. We extend this notation to sets of strings in the usual manner, with $aT := \mg{at \mid t\in T}$ for $T\subseteq \strings,a\in \strings$.
As noted before, we have a probability measure $p$ on $\Alphabet$, with
point masses $p_a \in (0,1), a\in \Alphabet$.
For a string $\alpha = (a_1, a_2, \dots, a_n) \in \strings$ we define by extension $p_\alpha := p_{a_1}\dots p_{a_n}$.

This set $\strings$ can naturally be identified with the nodes of the infinite $\abs\Alphabet$-ary
tree, with a node $a\in \strings$ being an ancestor of a node $b\in \strings$ if and only if
$a$ is a prefix of $b$. We call subtrees of $\strings$ including the root
\textit{$\Alphabet$-ary trees} and associate them with their node set $T\subset \strings$.
For technical reasons, we consider also the empty graph $\emptyset$ as a tree.
Associating the trees with their node sets forms a bijection between $\Alphabet$-ary trees and subsets $T\subset \strings$ with the property that for all $a\in T$ all prefixes of $a$ are also in $T$.
We write $\trees$ for the set of $\Alphabet$-ary trees and let $\bullet := \mg{\eps}$ be the tree consisting only of the root.

For a set $T\subset \strings \cup \Alphabet^{\NN}$ of finite or infinite strings and a finite string $v\in \strings$, we define $T^v$ as the strings
starting with $v$ with $v$ removed. In exact terms, that means $T^v := \mg{\alpha\mid v\alpha\in T}$.
If $T$ is a tree and $v$ a node  of $T$, we call $T^v$ the \textit{fringe tree}. It is then the subtree of $T$ consisting of $v$ and its descendants.

Given a finite set $\mathfrak X \subseteq \strings \cup \Alphabet^{\NN}$ of 
(finite or infinite) strings
where no string is a prefix of another, the 
\textit{trie} $T(\mathfrak X)$ is a $\Alphabet$-ary tree defined as follows, see \cite[Section 6.1]{jacquet_szpankowski_2015} and \cite[Chapter 6.3]{knuth}:
The trie for the empty set is empty. The trie for a single string consists of only one leaf, the root. We say that this leaf \textit{stores} the string. The interesting case is the last: The trie for multiple strings consists of the root and
for every character $a\in \Alphabet$ this root has a child with the subtree being
a trie built from $\mathfrak X^a$, the strings starting with $a$. We are therefore splitting the
set of strings into subtrees by their first character. Formally, we have
$T(\emptyset) := \emptyset$, for $\abs{\mathfrak X} = 1$ we have $T(\mathfrak X) := \bullet$  and for $\abs{\mathfrak X} \ge 2$
\begin{equation}
    T(\mathfrak X) := 
        \mg{\eps} \cup \displaystyle\bigcup_{a\in\Alphabet} aT(\mathfrak X^a).
\label{defi:trie}
\end{equation}


In the \textit{patricia trie}, introduced by
Morrison \cite{patricia} in 1968, a node $v$ can additionally store a string $I_v$ as
the \textit{common prefix} of all strings that are stored in leaves under it.
We write $\lcp(T)$ for the longest common prefix of a set $T\subset \strings \cup \Alphabet^{\NN}$ of strings. The patricia trie for one or zero strings is the same as the trie. For multiple strings, the longest common prefix $\lcp(\mathfrak X)$ is saved
in an attribute $I$ of the root, and the splitting is done on the first
character that not all strings have in common. Thus, for every character $a\in A$, the root has
as subtree the patricia trie built from $\mathfrak X^{\lcp\kl{\mathfrak X}a}$, the strings starting with $\lcp(\mathfrak X)a$.
Formally, we define $\pat T(\emptyset)$ as $\emptyset$ and $\pat T(\mathfrak X) = \bullet$ for $\abs{\mathfrak X} = 1$ as for tries.
For $\abs{\mathfrak X} \ge 2$ we define
\begin{equation}
    \pat T(\mathfrak X) := 
        \mg{\eps} \cup \displaystyle\bigcup_{a\in\Alphabet}
        a\pat T(\mathfrak X^{\lcp(\mathfrak X)a}).
\end{equation}
Then, the common prefix $\lcp\kl{\mathfrak X}$ is stored in the attribute $I_\eps$ of the root, and for every $a\in\Alphabet$, $\alpha\in\strings$ the attribute $I_{a\alpha}$ will be the attribute $I_{\alpha}$ of $\pat T(\mathfrak X^{\lcp(\mathfrak X)a})$. While the patricia trie technically consists of the tree and the map
$v \mapsto I_v$ of the common prefixes, we usually regard it as tree
and ignore the additional structure when convenient.


We use $\pat T(\mathfrak X)$ as suggestive notation to put emphasis on the fact that the patricia trie can also be constructed from the trie.
For a trie $T$, the patricia trie $\pat(T)$ for the same strings
is constructed by iteratively merging 
every node $\nu\in\strings$ with exactly one child $\nu a$ for $a\in \Alphabet$ with its child and prepending the
character $a$ to the attribute $I_\nu$ of the merged node. Conversely,
by adding a parent with only one child for every character in $I_\nu$
for every node $\nu$ one can construct the trie from the patricia trie.
With simple induction one can show that both definitions actually are equivalent, so that we have $\pat(T(\mathfrak X)) = \pat T(\mathfrak X)$.

Let $\trien$ denote the random trie generated by $n\in \NN$ i.i.d.\ infinite strings, where each character is i.i.d.\ sampled from the distribution $p$ and $\ptrien = \pat(\trien)$ the patricia trie from the same strings.
We also consider a Poisson version, where we construct the trie
from a Poi($ł$)-distributed number $N_ł$ of strings for $ł \ge 0$, with $N_ł$ independent of the strings. We define $\triel := \triek[N_ł]$ and
$\ptriel := \ptrie{N_ł} = \pat\kl{\triel}$.

The properties that we study are represented as so-called additive
functionals. Given a function $\varphi: \trees \to \RR^n, n≥1$ with 
$\varphi(\varnothing) =0$, we define the
corresponding \textit{additive functional} $\Phi: \trees \to \RR^n$
as
\begin{equation}\label{defi:phit}
    \Phi(T) = \sum_{α\in \strings} \varphi(T^α)
    = \sum_{v\in T} \varphi(T^v).
\end{equation}
This can be written recursively as
\begin{equation}
    \Phi(T) = \varphi(T) + \sum_{a\in\Alphabet}\Phi(T^a);\quad \Phi(\varnothing)=0,
\end{equation}
which also shows that every function $\Phi: \trees \to \RR^n$ with $\Phi(\varnothing)=0$ can be represented as an additive functional.
The term ``additive functional'' for $\Phi$ is thus mainly defined by
its relation
to $\varphi$, which is called \textit{toll function} of $\Phi$.
For example, if we define 
\begin{equation}
    \varphi_i(T) := \eins\mg{T\text{ has more than 1 node}},
\end{equation}
then the corresponding additive functional $\Phi_i$ counts the internal
nodes of a tree. For a patricia trie $\pat T(\mathfrak X)$, we assume that $\Phi(\pat T(\mathfrak X))$ only depends on the patricia trie regarded as a $\Alphabet$-tree and ignores the additional attributes $I_v$.

We use the notation $\dto$ for convergence in distribution and
$\dapprox$ for approximation in distribution, i.e.\ for two sequences $X_n$ and  $Y_n$ of random variables on the same Borel space $S$, $X_n \dapprox Y_n$ holds if and only if for all bounded, continuous functions $f:\:S\to \RR$,
\begin{equation}\label{def:dapprox}
    \abs[\big]{\Ek{f(X_n)}- \Ek{f(Y_n)}} \longrightarrow 0 \qquad\text{as $n\to\infty$.}
\end{equation}
If $S=\RR$, we say that this approximation \emph{holds with all moments} if \eqref{def:dapprox} is also true for $f(x)=\abs{x}^r$, $r>0$ and $f(x) = x^s$, $s\in\NN$. Compare Appendix~B of \cite{janson}.
Furthermore, we write $H$ for the \textit{entropy} of the source $p$,
that is
\begin{equation}
    H := \sum_{a\in\Alphabet} p_a\log\kl*{\frac1{p_a}}.
\end{equation}
For complex $s\in \CC$ let
\begin{equation}\label{defi:rho}
    \rho(s) := \sum_{a\in\Alphabet}p_a^s.
\end{equation}
For natural $s\in \NN$, we may interpret
this as the probability for $s$ strings to start with the same character.

\section{Patricia trie and trie}
We now relate additive functionals on patricia tries to those on tries.
\begin{proposition}\label{defi:tphi}
    An additive functional $\Phi$ on patricia tries defines 
    an additive functional
    $
        \tPhi := \Phi\circ \pat
    $ 
    on tries
    by pullback with $\pat$. 
    The toll function
    $\tphi(T)$
of $\tPhi$ is given by
\begin{equation}
    \tphi(T) = \begin{cases}
        0 & T \text{'s root has exactly one child} \\
        \varphi(\pat(T)) & \text{else.}
    \end{cases}
\end{equation}
\end{proposition}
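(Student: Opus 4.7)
The plan is to observe first that $\tPhi := \Phi \circ \pat$ is automatically an additive functional. Indeed, $\pat(\varnothing) = \varnothing$ and $\Phi(\varnothing) = 0$, so $\tPhi(\varnothing) = 0$; any function on $\trees$ with this property is an additive functional, and its toll function is uniquely recovered from
\[
    \tphi(T) = \tPhi(T) - \sum_{a\in\alphabet} \tPhi(T^a).
\]
So the task reduces to computing this difference in each of the two cases stated in the proposition.

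The central ingredient is a short sublemma describing how $\pat$ interacts with the subtree decomposition $T = \mg{\eps} \cup \bigcup_a aT^a$. I would first establish the following: if the root of $T$ has $0$ or at least $2$ children, then $\pat(T)^a = \pat(T^a)$ for every $a\in\alphabet$; if instead the root of $T$ has exactly one child, say at position $a$, then $\pat(T) = \pat(T^a)$. The first statement says that a branching (or leaf) root is preserved by $\pat$ and its branches processed independently; the second says that a one-child root is absorbed into its child by the merging procedure. Both follow from the recursion for $\pat T(\mathfrak X)$ and the equivalence $\pat(T(\mathfrak X)) = \pat T(\mathfrak X)$: the root of $T(\mathfrak X)$ has exactly one child at $a$ iff $\lcp(\mathfrak X)$ begins with $a$, and in that case the identity $\mathfrak X^{\lcp(\mathfrak X)b} = (\mathfrak X^a)^{\lcp(\mathfrak X^a) b}$ forces the recursions for $\pat T(\mathfrak X)$ and $\pat T(\mathfrak X^a)$ to coincide.

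Given the sublemma, the verification of the toll-function formula is a short case split. If the root of $T$ has $0$ or at least $2$ children (including the trivial sub-cases $T = \varnothing$ and $T = \bullet$), then $\pat(T)^a = \pat(T^a)$ for every $a$, and the recursion for $\Phi$ yields
\[
    \tPhi(T) = \Phi(\pat(T)) = \varphi(\pat(T)) + \sum_{a\in\alphabet}\Phi(\pat(T^a)) = \varphi(\pat(T)) + \sum_{a\in\alphabet}\tPhi(T^a),
\]
so $\tphi(T) = \varphi(\pat(T))$. If instead the root of $T$ has exactly one child at some $a$, the sublemma gives $\pat(T) = \pat(T^a)$ and hence $\tPhi(T) = \tPhi(T^a)$, whereas $T^b = \varnothing$ and $\tPhi(T^b) = 0$ for $b\ne a$; the difference collapses to $\tphi(T) = 0$.

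The only real obstacle is the sublemma about $\pat$'s behaviour under subtree decomposition, and even that reduces to the brief $\lcp$-calculation sketched above, since the paper has already asserted the equivalence of the two definitions of the patricia trie. Everything else is a direct unwinding of the additive-functional recursion.
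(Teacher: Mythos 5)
Your proof is correct, but it takes a genuinely different route from the paper. The paper argues globally: it sets up a bijection $v \mapsto \patv v$ between the nodes of $T$ with outdegree $\ne 1$ and the nodes of $\pat(T)$, shows that $\pat(T)^{\patv v}$ agrees with $\pat(T^v)$ up to the root's common-prefix attribute (which $\varphi$ ignores), and then reindexes the sum $\Phi(\pat(T)) = \sum_{\patv v}\varphi(\pat(T)^{\patv v})$ to read off $\tphi$. You instead work locally: you observe that $\tphi$ is uniquely determined by the recursion $\tphi(T) = \tPhi(T) - \sum_a \tPhi(T^a)$ and verify it via the sublemma that $\pat$ commutes with the one-step subtree decomposition (with the outdegree-one case collapsing to $\pat(T)=\pat(T^a)$). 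Your sublemma is essentially the depth-one instance of the paper's fringe-tree identity, applied inductively rather than all at once. Both arguments are about the same length; the paper's version makes the global correspondence between nodes explicit (which is also reused informally elsewhere, e.g.\ in the discussion around \Cref{lemma:pt}), whereas yours is more self-contained and mechanically checkable since it reduces everything to one level of the recursion and the recursive characterization of a toll function.
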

\begin{proof}
    Each node $\patv v$ of the patricia trie $\pat(T)$
    was created by compressing nodes in the trie. Associate
    $\patv v$ with the youngest node $v$ of the compressed nodes
    (the one that is a descendant of all the others).
    In this way, we have a bijection between the nodes of the
    patricia trie and the nodes in the trie which have not exactly
    one child.

    Compressing the nodes does not change the ancestor-descendant relationship, so
    the fringe tree of $\patv v$ consists of the compressed nodes
    of the fringe tree of $v$. Thus, $\pat(T)^{\patv v}$ is the same as  $\pat(T^v)$,
    except that in $\pat(T^v)$ there is no common prefix in the root $(I_{\eps} = \eps)$.
    Because the toll function $\varphi$ is not allowed to depend
    on the common prefixes, we nevertheless
    have $\varphi(\pat(T)^{\patv v}) = \tphi(T^v)$.
    Nodes with exactly one child are ignored by $\tphi$, so
    summing over all nodes yields the equation.
\end{proof}

Since then $\Phi(\ptrien) = \tPhi(\trien)$, all results about additive
functionals on tries can be extended to patricia tries by just exchanging
the toll function $\varphi$ with $\tphi$ if the conditions are still satisfied. There is an alternative way
to write $\tphi$: By defining
\begin{equation}\label{defi:phip}
    \varphi_p(T) := \eins\mg{T\text{ has not exactly one child.}},
\end{equation}
we have $\tphi(T) = \varphi_p(T)\varphi\kl*{\pat(T)}$ for all toll functions $\varphi$.

\section{Limit theorems}
We reformulate the main Theorem 3.9 of \cite{janson} for
patricia tries.
Let $\Phi$ be an additive functional on patricia tries. We call the
functional \textit{increasing} if for every two sets $\mathfrak X \subseteq \mathfrak Y \subset \Alphabet^{\NN}$ of strings we have 
that $\Phi\kl*{\pat\kl{T(\mathfrak X)}} \le \Phi\kl*{\pat\kl{T(\mathfrak Y)}}$.
In other words, $\Phi$ is increasing if and only if $\Phi\circ\pat\circ T = \tPhi \circ T$ is monotonous.
Note that this not the same as $\Phi$ being increasing as an additive functional
on tries as in \cite{janson}: In patricia tries, new leaves can also
grow out of edges, making functionals such as ``number of edges
that connect two nodes with more than 3 children'' not increasing.

Let $\Phi$ be an additive functional on patricia tries and $\tPhi$
the corresponding pullback on tries as defined in \Cref{defi:tphi},
with toll function $\tphi$. Let $\chi := \varphi(\bullet)$ be the value
every leaf contributes to $\Phi$. The leaves have to be treated as a special case in the analysis of patricia tries (and tries) because their behavior is
substantially different from internal nodes. However, their number is
simply $n$ in $\ptrien$ and $N_ł$ in $\ptriel$, and we use $\chi$ only
to accommodate for this fact.

To express the asymptotics for mean and variance, we need the following functions, where the terms are much simpler for $\chi=0$:
\begin{align}
    \label{defi:fe}
    f_E(ł) &:= \El{\tphi(\triel)} - \chi ł e^{-ł}
    \\ \nonumber
    f_V(ł) &:= 2\Cov\pars*{\tphi(\triel), \tPhi(\triel)}
        - \Var\kl*{\tphi(\triel)}
    \nonumber\\ 
    &\phantom{:= }
        + 2\chi ł e^{-ł}(\E\tPhi(\triel)-\E\tphi(\triel))
        - \chi^2ł e^{-ł}(1-ł e^{-ł})
        \label{defi:fv}
    \\ \label{defi:fc}
    f_C(ł) &:= \Cov\pars*{\tphi(\triel), N_ł} + \chi ł(ł-1)e^{-ł}.
\end{align}
In the convenient case of $\chi=0$, this simplifies to
\begin{align}
    \label{defi:fe:simp}
    f_E(ł) &= \Ek*{\tphi(\triel)}
    \\
    f_V(ł) &= 2\Cov\pars*{\tphi(\triel), \tPhi(\triel)}
        - \Var\kl*{\tphi(\triel)}
    \label{defi:fv:simp}
    \\
    f_C(ł) &= \Cov\pars*{\tphi(\triel), N_ł}.
    \label{defi:fc:simp}
\end{align}
Now for $X=E,V,C$ we define the Mellin transform of $f_X$ as
\begin{equation}
    f_X^\ast(s) := \int_0^\infty t^{s-1}f_X(t)\mathrm dt,
\end{equation}
for $s\in \CC$ where the integral converges absolutely.

For certain distributions $p$, there are typically oscillations in
mean and variance of additive functionals. These occur if there
is a number $d_p$, such that
\begin{equation}\label{defi:fxs}
    \mg{\log p_a \mid a \in \Alphabet} \subseteq d_p\ZZ.
\end{equation}
We then define $d_p$ to be the biggest such number. If there is no
such number, we define $d_p := 0$. An alternative characterization is to define $d_p$ as the infimum of positive elements of the additive subgroup
generated by $\mg{\log p_a \mid a \in \Alphabet}$.

If $d_p=0$, there are no oscillations, and we define
\begin{equation}\label{defi:psix0}
    \psi_X(t) := f_X^\ast(-1)
\end{equation}
as constant functions. 
If $d_p > 0$, we define $\psi_X$ as bounded, continuous, $d_p$-periodic functions with the Fourier series
\begin{equation}\label{defi:psix}
    \psi_X(t) \sim \sum_{m=-\infty}^\infty f_X^\ast\pars*{-1-\frac{2\pi m}{d_p}i}
    e^{{2\pi i mt}/{d_p}}.
\end{equation}
Moreover, if $X=E$ or if $f'_X(\lambda) = O(\lambda^{-\eps_1})$ as $ł\to\infty$
for some $\eps_1>0$, \cite[Theorem 3.1]{janson} shows that the Fourier series \eqref{defi:psix} converges
absolutely, and thus ``$\sim$'' may be replaced by ``$=$'' in \eqref{defi:psix}.
\cite[Lemma 3.6]{janson} further shows that $\psi_C(t) = \psi_E(t) + \psi_E'(t)$ for all
values of $t$, so that one actually just needs to compute $\psi_E$ and $\psi_V$.
Typically, the values $f_X^\ast\pars*{-1-\tfrac{2\pi m}{d_p}i}$
are small for $m \ne 0$, so we can still consider the mean term
$f_X^\ast(-1)$ as ``average asymptotic value''.

With these values and definitions, we can finally state our version of \cite[Theorem 3.9]{janson}.

\begin{theorem}\label{zgs}
    Let $\varphi_+, \varphi_-$ be bounded toll functions on patricia tries so that their additive functionals $\Phi_+, \Phi_-$ are increasing.
    Then, $\Phi := \Phi_+-\Phi_-$ is also an additive functional on  patricia tries. Let $\tphi$, $\tPhi$ be toll function and additive functional
    on tries corresponding to $\Phi$ as in \Cref{defi:tphi}. Then, with the functions $f_X^\ast, \psi_X$ defined in \eqref{defi:fe}-\eqref{defi:psix}:
\begin{enumerate}[label=\roman*)]
    \item As $ł\to\infty$ and $n\to\infty$,
    \begin{align}
        \frac{\Phi(\ptriel) - \E[\Phi(\ptriel)]}{\sqrt ł} 
        &\dapprox
        \mathcal N(0, \poisv\sigma^2(ł)) 
        ,&
        \label{conv:sym:n}
        \frac{\Phi(\ptrien) - \E[\Phi(\ptrien)]}{\sqrt n} 
        &\dapprox
        \mathcal N(0, \sigma^2(n))
    \end{align}
    with all moments, where
    \begin{align}
        \label{ass:sigma}
        \poisv\sigma^2(ł) &= \chi^2 + H\inv \psi_V(\log ł) \\
        \label{ass:hsigma}
        \sigma^2(n) &= H\inv \psi_V(\log n)
        - H^{-2}\psi_C(\log n)^2
        - 2\chi H\inv \psi_C(\log n).
    \end{align}
    \item The expected values satisfy
    \begin{equation}\label{zgs:exp}
        \E[\Phi(\ptrien)] - \E[\Phi(\poisv{\triecal P}_n)] \in o\pars*{\sqrt{n}},
    \end{equation}
    and we may thus replace $\E[\Phi(\ptrien)]$ with $\E[\Phi(\poisv{\triecal P}_n)]$
    in \eqref{conv:sym:n}.
    \item \label{zgs:varn} If
    $
        \Var\Phi(\ptrien) \in \Omega(n),
    $
    then
    \begin{align}
        \frac{\Phi(\ptriel) - \E[\Phi(\ptriel)]}
            {\sqrt{\Var\Phi(\ptriel)}} &\dto
        \mathcal N(0, 1) 
        ,&
        \label{conv:var:n}
        \frac{\Phi(\ptrien) - \E[\Phi(\ptrien)]}
            {\sqrt{\Var\Phi(\ptrien)}} &\dto
        \mathcal N(0, 1)
    \end{align}
    with all moments for all values of $d_p$.
    \item The expected values $\E[\Phi(\trien)],\E[\Phi(\triel)]$
        satisfy
    \begin{align}
        \frac{\E[\Phi(\triel)]}{ł} &= \frac{\psi_E(\log ł)}H +o(1)
        ,& \label{conv:ephi:n}
        \frac{\E[\Phi(\ttrien)]}{n} &= \frac{\psi_E(\log n)}H +o(1).
    \end{align}
\end{enumerate}
\end{theorem}

\begin{remark}
    The assumption of boundedness on $\phi_+, \phi_-$ can, as in \cite{janson}, be weakened to show that for some $\eps>0$,
    \begin{equation}
        \Ek{\phi_\pm(\triel)}  = \mathrm O(\lambda^{1-\eps}),\qquad \Var\kl{\phi_\pm(\triel)} = \mathrm O(\lambda^{1-\eps})
    \end{equation}
    and that there exists some $r>2$ such that
    \begin{equation}
        \E\abs*{\phi_\pm(\triel) - \E\phi_\pm(\triel)}^r = \mathrm O(\lambda^{r/2}).
    \end{equation}
    The convergence \eqref{conv:var:n} then only holds for moments $s<r$, cp. Theorem~5.8 in \cite{janson}.
\end{remark}

\begin{remark}
    \Cref{zgs} also applies to linear combinations of additive functionals which meet the conditions of \Cref{zgs}, so one can use the Cramér–Wold device to show a multivariate version of \Cref{zgs} (cp.\ the proof of Theorem~5.6 in \cite{janson}).
\end{remark}

From his version of \Cref{zgs} Janson derives a weak law of large numbers, while
leaving as an open question if one also had almost sure (a.s.) convergence. Using
a proof technique for the strong law of large numbers for sums of i.i.d.\ 
random variables with a finite fourth absolute moment,
we can show a.s.\ convergence. We have the following strong law
of large numbers for patricia tries. The same proof can be applied to tries, answering the open question.

\begin{theorem}\label{stggrz}
    Let $\varphi$ be a bounded toll function on patricia tries as in \Cref{zgs}.
    Then, as $n\to\infty$,
    \begin{align}\label{stggrz:n}
        \frac{\Phi(\ptrien)}{n} - H\inv \psi_E(\log(n)) - \chi 
        \longrightarrow 0\quad\textnormal{ a.s.}
    \end{align}
    In particular, if $d_p=0$, then, as $n\to\infty$,
    \begin{equation}\label{stggrz:ass}
        \frac{\Phi(\ptrien)}n  \longrightarrow H\inv f_E^\ast(-1) + \chi \quad\textnormal{ a.s.}
    \end{equation}
\end{theorem}

\begin{proof}
    Note that $\E[\Phi(\ptrien)]$ is asymptotically 
    $H\inv \psi_E(\log ł) + \chi$ according to \eqref{conv:ephi:n}. So \eqref{stggrz:n}
    is equivalent to showing that 
    $n\inv Z_n := n\inv(\Phi(\ptrien) - \E\Phi(\ptrien)) \to 0$
    a.s.
    Writing the quantifiers of ``not converging'' out, we have
    \begin{equation}
        \mg*{\frac{Z_n}n \not\to 0} = \bigcup_{m\in\NN}\bigcap_{N=1}^\infty 
        \bigcup_{n=N}^\infty \mg*{\abs*{\frac{Z_n}n}>\frac1m}
        \label{stggrz:proof:quant}
        = \bigcup_{m\in\NN} \limsup_{n\to \infty}\mg*{\abs*{\frac{Z_n}n}>\frac1m}.
    \end{equation}
    Since we have convergence of all moments of $Z_n/\sqrt n$ 
    in \eqref{conv:sym:n}, we have
    $\E\abs{Z_n/n}^4 ≤ C/n^2$ for a constant $C>0$ and all $n≥1$.
    Markov's inequality implies
    \begin{align*}
        &\phantom= \sum_{n=1}^\infty \Pl{\abs*{\frac{Z_n}n}>\frac1m}
        = \sum_{n=1}^\infty \Pl{\abs*{\frac{Z_n}n}^4>\frac1{m^4}}
        \le \sum_{n=1}^\infty \frac{m^4C}{n^2} < \infty,
    \end{align*}
    hence the Lemma of Borel-Cantelli yields
    \[
        \Pl{\limsup_{n\to \infty}\mg*{\abs*{\frac{Z_n}n}>\frac1m}}=0
    \]
    for all $m≥1$. In view of \eqref{stggrz:proof:quant} subadditivity
    implies the assertion.
\end{proof}

\section{Applications}

With \cref{zgs}, we can calculate the asymptotic distribution of random
fringe trees in patricia tries. Furthermore, we show another application, the independence number, to demonstrate how to use this theorem without a closed formula for the functions $f_E, f_V$.

\subsection{Fringe patricia tries}\label{sec:fsize}

Our main application is the study of random fringes trees $\ptrien^\ast$ of patricia tries.
We want to first study the size $\abs{\ptrien^\ast}_e$,
as measured in number of leaves or equivalently strings.
We count the number of subtrees of size $k≥1$ with the functional
\begin{equation}
    \varphi_k(T) := \eins\mg{|T|_e=k}.
\end{equation}
Note that subtrees of size $1$ are the leaves, so 
$\varphi_1 = \varphi_\bullet = \tphi_\bullet$. This case behaves differently from the others and is trivial,
so we will first consider only $k≥2$.

While $\Phi_k$ is not increasing, it can be written as 
$\Phi_{≥k}-\Phi_{≥k+1}$, where $\varphi_{≥k} := \eins\mg{|T|_e≥k}$
is bounded and $\Phi_{≥k}$ is increasing, so we can apply
\Cref{zgs} and \Cref{stggrz}.

We present two ways to get results about fringe patricia tries. One way is to directly calculate $f_{E,k}, f_{E,k}^\ast$ etc.
Another way is to directly link fringe trees on tries with those on patricia tries. For those, we first have to understand what differs from tries: the common prefixes.

\begin{lemma}\label{lemma:pt}
    Let $T$ be a fixed $\Alphabet$-nary tree where no node has outdegree 1 and $k := |T|_e$.
    Let $T_e \subset \strings$ be its leaves and $T_i$ be its internal nodes.
    The probability $p_T := \P(\ptriek = T)$ of a random patricia trie of size
    $k$ to be $T$ is given by
    \begin{equation}
        p_T = k!\prod_{v \in T_e} p_v \prod_{w\in T_i} \frac1{1-\rho(|T^w|_e)}.
    \end{equation}
    Conditioned on that the tree structure of $\ptriek$ is given by $T$, the common prefixes per node are
    independent. In an internal node $v\in T_i$, their distribution is given by
    $q_{|T^v|_e}$, with $q_i$, $i≥2$ defined as
    \begin{equation}
        q_i(\mg{α}) := p_α^i (1-\rho(i));\;α\in\strings.
    \end{equation}
    Thus, the length $\abs I_v$ is $\mathrm{Geom}_0\kl*{1-\rho\kl*{\abs{T^v}_e}}$-distributed, where we use $\mathrm{Geom}_0$ to refer to the geometric distribution with support $\NN_0$ and $\mathrm{Geom}_1$ to the one with support $\NN^+$.
\end{lemma}

We will prove \Cref{lemma:pt} at the end of this section. The trie can be reconstructed from the tree structure and the common prefixes of the patricia trie.
As $\pat$ is compressing chains of nodes into the common prefixes, the inverted process basically expands those common prefixes into a chain of nodes.
Every fringe patricia trie $T^v$ of size $k$ then corresponds to $\abs{I_v} + 1$ many fringe tries of size $k$. By \Cref{lemma:pt}, $\Phi_k(\trien)$ is thus distributed as the sum
of $\Phi_k(\ptrien)$ many $\mathrm{Geom}_1(1-\rho(k))$-distributed, independent random variables.
This immediately links mean and variance of the number of
fringe trees of size $k$:
\begin{align}
    \Ek{\Phi_k(\trien)} &= \frac1{1-\rho(k)}\Ek{\Phi_k(\ptrien)}
    \\ \Var\kl[\big]{\Phi_k(\trien)} &= 
    \Ek[\big]{\Var\kl{\Phi_k(\trien) \mid \Phi_k(\ptrien)}}
    + \Var\kl[\big]{\Ek{\Phi_k(\trien) \mid \Phi_k(\ptrien)}}
    \nl=\frac{\rho(k)}{\kl{1-\rho(k)}^2} \Ek{\Phi_k(\ptrien)}
    + \frac1{\kl{1-\rho(k)}^2}\Var\kl{\Phi_k(\ptrien)}.
    \label{link:var}
\end{align}
For fixed $k$, mean and variance for patricia tries are hence just linear combinations of mean and variance for tries, so this will also hold for $f_{E,k}, f_{V,k}$
and their Mellin transforms $f_{E,k}^\ast, f_{V,k}^\ast$.

Nevertheless, we show how to calculate them directly, starting with the mean.
From \Cref{defi:tphi} and \eqref{defi:phip} we have $\tphi_k(\triel) = \eins\mg{N_ł = k}\varphi_p(\triel)$,
so the mean function is given by
\begin{align}
    f_{E,k}(ł) &= \El{\tphi_k(\triel)}
    = \P(N_ł = k)\E[\varphi_p(\triel)\mid N_ł=k]
    \nl = \frac{ł^k}{k!}e^{-ł}
        \pars*{1-\sum_{a\in\Alphabet}p_a^k}
        = \frac{ł^k}{k!}e^{-ł}
        \pars*{1-\rho(k)}
    \label{fekl}
\end{align}
with $\rho(k)$ as defined in \eqref{defi:rho}.
By the definition of the Gamma function $\Gamma$, the Mellin transform is given by
\begin{align}
    f^\ast_{E,k}(s) &= (1-\rho(k))\frac{\Gamma(k+s)}{k!},
    &\label{fekm1}
    f^\ast_{E,k}(-1) &= \frac{1-\rho(k)}{k(k-1)}.
\end{align}
Now to the variance. The function $f_{V,k}$ is given by
\begin{align}
    f_{V,k}(ł) &:= 2\Cov\pars*{\tphi_k(\triel), \tPhi_k(\triel)}
        - \Var\kl*{\tphi(\triel)}.
\end{align}
Note that if $\tphi_k(\triel)$ is 1, that is if the root has more than one child and there are $k$ strings in the trie, all other nodes must have less than $k$ strings. So then $\tPhi_k(\triel) = 1$. One then uses monotonous convergence to show
\begin{align}
    f_{V,k}(ł) &= \Ek{\tphi_k(\triel)} - 2\Ek{\tphi_k(\triel)}\Ek{\tPhi_k(\triel)} + \Ek{\tphi_k(\triel)}^2
    \nl
    = f_{E,k}(ł) - 2\sum_{α\in \strings}f_{E,k}(ł)\Ek{\tphi_k(\triel^α)} + f_{E,k}^2(ł)
    \nl
    = f_{E,k}(ł) - 2\sum_{α\in \strings} f_{E,k}(ł)f_{E,k}(p_α ł) + f_{E,k}^2(ł).
    \label{fvkl:sum}
\end{align}
The last equality is because $\triel^α$ is the trie of the strings starting with $α$ (at least if it exists), a fact which can be directly deduced from the definition in \eqref{defi:trie}. Since number of strings starting with $\alpha$ is Poi$(łp_α)$-distributed, $\tphi_k(\triel^α)$ is distributed as $\tphi_k(\triel[p_α])$.
Note that the last term is the term in the sum for $\alpha = \eps$.
Sums like in \eqref{fvkl:sum} show up often in the analysis of $f_V$, so Janson defines the shorthand ${\sum_α}^\ast := \sum_{α\in\strings} + \sum_{α\in\strings\ohne\mg\eps}$, which summarizes our last two terms. We can then substitute $f_{E,k}$ in using \eqref{fekl}:
\begin{align}
    f_{V,k}(ł) &= \frac{1-\rho(k)}{k!}ł^k e^{-ł} - {\sum_α}^\ast
    \fracc{1-\rho(k)}{k!}^2 ł^{2k}p_α^k e^{-ł(1+p_α)}.
    \label{fvkl}
\end{align}
The Mellin transform, which interchanges with the sum because of monotone convergence, can then be expressed using the Gamma function.
\begin{align}
    \label{fvks}
    f_{V,k}^\ast(s) &= \frac{1-\rho(k)}{k!}\Gamma(k+s)
    - {\sum_α}^\ast
    \fracc{1-\rho(k)}{k!}^2 p_α^k\Gamma(s+2k) (1+p_α)^{-s-2k}
    \\\label{fvkm1}
    f_{V,k}^\ast(-1) &= 
    \frac{1-\rho(k)}{k(k-1)} - 
    (2k-2)!\fracc{1-\rho(k)}{k!}^2
    {\sum_α}^\ast
    \frac{p_α^k}{(1+p_α)^{2k-1}}.
\end{align}

\begin{theorem}\label{satz:fk}
    \Cref{zgs} and \ref{stggrz} apply to the number $\Phi_k(\ptrien)$ of fringe trees with $k\ge 2$ strings in a patricia trie of $n$ strings.
    Especially, we have
    \begin{equation}
        \frac{\Phi_k(\ptrien)-\E\Phi_k(\ptrien)}{\sqrt{n}}
        \dapprox \mathcal N(0,\sigma^2(\log n))
    \end{equation}
    with convergence of all [absolute] moments. The variance $\sigma^2(\log n)$ is bounded and periodic.
    In the $d_p=0$ case, we have convergence
    \begin{equation}
        \frac1n \Phi_k(\ptrien) \longrightarrow \frac{1-\rho(k)}{Hk(k-1)}
    \end{equation}
    a.s.\ and with all moments and the term  $\sigma^2(\log n)$ is constant and given by
    \begin{align}
        \sigma^2(\log n) = \frac{1-\rho(k)}{Hk(k-1)} - 
        \frac{(2k-2)!}{H}\fracc{1-\rho(k)}{k!}^2
        {\sum_α}^\ast
        \frac{p_α^k}{(1+p_α)^{2k-1}}.
    \end{align}
    If further also $\abs\Alphabet = 2$, this means that
    \begin{equation}\label{pk_a2}
        \Pk{\abs{\ptrien^\ast}_e = k \given \ptrien} \longrightarrow
        \frac{1-\rho(k)}{2Hk(k-1)}
    \end{equation}
    a.s.\ and with all moments. 
    In the $d_p>0$ case, small oscillations around the above values occur, see \Cref{zgs}.
\end{theorem}
\begin{proof}
The theorem is just a straightforward application of \Cref{zgs}, with
the values for $f_{E,k}^\ast(-1)$ and $f_{V,k}^\ast(-1)$ from \eqref{fekm1} respective \eqref{fvkm1}. The statement for $\abs\Alphabet = 2$ follows from the fact that for $\abs\Alphabet = 2$ all internal nodes have the same outdegree of 2, so that $\abs\ptrien = 2n-1$ deterministically and therefore \begin{equation}
    \Pk{\abs{\ptrien^\ast}_e = k \given \ptrien} = \frac1{2n-1}
    \Ek{\Phi_k(\ptrien) \given \ptrien} \quad\text{a.s.}
\end{equation}
\end{proof}
\begin{remark}
    It is possible to get results like \eqref{pk_a2} also for $\abs\Alphabet > 2$. Theorem 4.7 from \cite{janson}, which gives the asymptotic normality and asymptotics for $\Pk{\abs{\trien^\ast}_e = k \given \trien} = \tfrac{\Phi_k(\trien)}{n+\Phi_i(\trien)}$, holds similarly also for patricia tries, with
    just the functionals for size and fringe tree count replaced by their patricia trie counterparts. The quantity
    \begin{equation}\label{defi:hstrich}
        \hs := \sum_{a\in\Alphabet} (1-p_a)\log\kl*{\frac1{1-p_a}},
    \end{equation}
    describes the expected size of a patricia trie, which is
    (ignoring oscillations) $\tfrac{J+H}{H}n+o(n)$, see \cite{bourdon}.
    The distribution of fringe trees is then asymptotically
    \begin{equation}
        \Pk{\abs{\ptrien^\ast}_e = k \given \ptrien} =
            \frac{1-\rho(k)}{(J+H)k(k-1)} + \text{oscillations} + o(1).
    \end{equation}
    See \cite{master} for the detailed terms of the oscillations and the variance of $\Pk{\abs{\ptrien^\ast}_e = k \given \ptrien}$ using this analysis.
\end{remark}

Compare this to the asymptotic distribution of fringe tries, which oscillates around $\tfrac{1}{(1+H)k(k-1)}$, see \cite[Theorem 4.4]{janson}.
In the trie, the distribution only depends on the entropy $H$, whereas
in the patricia trie there is an additional dependency on the source in form of $\rho$.
Other known asymptotic distribution of fringe trees include the random recursive tree, which has a limit of $\tfrac 1{k(k+1)}$\cite{Aldous1991AsymptoticFD}.
While these probabilities decay with $k^{-2}$ for large $k$, the probabilities for conditioned Galton-Watson decay more slowly, with order $k^{-3/2}$, see \cite{Aldous1991AsymptoticFD}.

\begin{remark}
    For a fixed $\Alphabet$-ary tree $T$, one can count the number $\Phi_T$ of fringe trees equal to $T$.
    Because the probability of $\ptriel$ to be $T$ is the probability of the $\ptriel$ having $k$ strings and the patricia trie of the $k$ strings being $T$, the mean function is given by
    $f_{E,T}(ł) = f_{E,k}(ł)\Pk{\ptriek = T}$.
    The rest of the analysis is largely the same as for $\Phi_k$ and one can get an
    equivalent of \Cref{satz:fk} for $\Phi_T$ and the tree structure of $\ptrien^\ast$.
\end{remark}

We finish this section with the proof of \Cref{lemma:pt}.
\begin{proof}[Proof of \Cref{lemma:pt}]
    In a trie there is exactly one string that has
    a leaf as a prefix. So for a valid trie $T'$, the probability of
    a random trie of size $k$ to be $T'$ is 
    $p_{T'} := k!\prod_{v\in T_e}p_v$.
    Having no node of outdegree 1 makes $T$ a valid patricia trie. Write $\pat\inv(T)$ for the set of all tries $T'$ such that $\pat T'$ has the tree structure $T$.
    The probability $p_T$ is the sum of the $p_{T'}$ of the
    tries $T'\in \pat\inv(T)$ that
    are given by assigning common prefixes to the inner nodes.
    Fixing the common prefixes $I_v \in \strings$ for every inner node $v\in T_i$
    gives us a unique trie $T'$. Let $w\in T$ be a leaf in the patricia trie, which corresponds to a leaf $\patv w\in T'$. This leaf $\patv w$, when seen as a string, consists of the characters in $w$ interleaved with all common prefixes $I_v$ of the true ancestors $v\in T$ of $w$. To be precise, if $w=a_1\cdots a_r$, then
    $\patv w = I_\eps a_1 I_{a_1} a_2 I_{a_1a_2} a_3 \cdots I_{a_1\dots a_{r-1}} a_r$.
    Thus, writing $v \le w$ for ``$v$ is ancestor of $w$'', we have $p_{\patv w} = p_w \prod_{v < w} p_{I_v}$.
    Multiplying $p_w$ over all leaves, $p_{I_v}$ gets multiplied as often as there are leaves in $T^v$.
    So the probability $p_{T'}$ is given by
    \begin{align}
        p_{T'}
        &=
        k!
        \kl*{\prod_{w\in T_e}p_w}
        \kl*{\prod_{v\in T_i} p_{I_v}^{|T^v|_e}}
        \nl
        = k!
        \kl*{\prod_{w\in T_e}p_w}
        \kl*{\prod_{v\in T_i} \frac1{1-\rho(|T^v|_e)}}
        \kl*{\prod_{v\in T_i} q_{|T^v|_e}(\mg{I_v})}.
    \end{align}
    Note that this term is the product of the proposed term for $p_T$ and the proposed point masses of $q_{\abs{T^v}_e}$.
    So if we show that the $q_i$ for $i≥2$ really are
    distributions and sum up to 1, this already shows the term for $p_T$, $q_i$ and the independence given the tree structure.
    Summing over strings of length $n \ge 0$, we have
    \begin{align*}
        \sum_{α\in\Alphabet^n} q_i(\mg{α})
        &= (1-\rho(i)) \sum_{α\in\Alphabet^n} p_{α}^i
        = (1-\rho(i)) \kl*{\sum_{a\in\Alphabet}p_a^i}^n
        = (1-\rho(i))\rho(i)^n.
    \end{align*}
    We thus showed that the lengths are really $\text{Geom}_0(1-\rho(i))$, which makes $q_i$ indeed a well-defined distribution since $\rho(i) < 1$ for $i>1$.
\end{proof}

\subsection{Independence Number}
Given a graph $G = (V,E)$, an \textit{independent set} $I\subseteq V$ is a set
of nodes such that there are no edges between two nodes in $I$.
The \textit{independence number} $\alpha(G)$ of a tree $G$ is the maximal
cardinality of an independent set.

While an algorithmically challenging problem on general graphs, on a tree $T$, the independence number $\alpha(T)$ can be calculated recursively, see \cite{indnum}, also for references on the asymptotic properties of the independence number of other random tree models.
The independence number only grows by at most 1 after adding
a node. For the toll function $\varphi_\alpha$ of $\alpha$ this means
that $\varphi_\alpha(T) := \alpha(T) - \alpha(T\ohne \mg \eps) \in \mg{0,1}$. If $\varphi_\alpha(T) = 0$, the largest independent set of $T$ is as large as the largest one of $T \ohne\mg\eps$. Therefore, we can find a maximal independence set excluding the root. If $\varphi_\alpha(T) = 1$,
the largest independent set of $T$ is not an independent set of $T\ohne\mg\eps$, therefore every maximal independent set must include the root $\eps.$ We call such nodes $\beta\in T$ with $\varphi_\alpha(T^\beta) = 1$ \textit{essential}.

An independent set containing the root cannot contain any of its children $b\in \Alphabet$. Hence, it is upper bounded by
\begin{equation}
    1 +\sum_{b\in \Alphabet} \alpha(T^b\ohne\mg b)
    = 1 + \alpha(T\ohne\mg\eps) - \sum_{b\in\Alphabet}\varphi_\alpha(T^b).
\end{equation}
Furthermore, an independent set not containing the root has maximal cardinality $\alpha(T\ohne\mg\eps)$. Thus, we have
\begin{equation}\label{defi:falpha}
    \varphi_\alpha(T) = \max\mg[\Big]{0, 1  - \sum_{b\in\Alphabet}\varphi_\alpha(T^b)};
\end{equation}
a node is exactly then essential
if none of its children are. Another related quantity is the \textit{matching number}, the maximal cardinality of a matching, which is just
given by $\abs T-\alpha(T)$ for trees.

The toll function $\varphi_\alpha$ is bounded and $\alpha$ is increasing. We can thus use \Cref{zgs} to get following theorem:

\begin{theorem}\label{indnum:sammelsatz}
    \Cref{zgs} applies to the independence number $\alpha(\ptrien)$ of a random patricia trie. Furthermore, $\Var\alpha(\ptrien) = \Omega(n)$ holds, and
    \begin{equation}\label{indnum:zgs}
        \frac{\alpha(\ptrien) - \Ek{\alpha\kl{\ptrien}}}
        {\sqrt{\Var\kl[\big]{\alpha(\ptrien)}}} \dto \mathcal N(0,1)
    \end{equation}
    with all moments. 
    If $d_p = 0$, the expectation is asymptotically
    \begin{equation}
        \frac{\Ek{\alpha(\ptrien)}}n = 
            1 + H\inv f_{E,\alpha}^\ast(-1)  + o(1)
    \end{equation}
    and if $d_p>0$, it is
    \begin{equation}\label{indnum:expansiond0}
        \frac{\Ek{\alpha(\ptrien)}}n = 
            1 + \frac1H \sum_{m=-\infty}^\infty f_{E,\alpha}^\ast\kl*{
                -1-\frac{2\pi mi}{d_p}
            }e^{2\pi im\log n/d_p}  + o(1).
    \end{equation}
    The values of $f_{E,\alpha}^\ast$ in the asymptotic expansions can be calculated to arbitrary precision. For example in the binary, symmetric case,
    the expected ratio 
    $\tfrac{\Ek{\alpha(\ptrien)}}{2n}$ of essential nodes is oscillating around
    \begin{equation}\label{indnum:expansiondne0}
        \frac 12+ \frac1{2H} f_{E,\alpha}^\ast(-1)
        \in (0.60225, 0.60316).
    \end{equation}
\end{theorem}

\begin{proof}
\eqref{indnum:zgs} follows from \Cref{zgs}\ref{zgs:varn}
if we show that $\Var\alpha(\ptrien) = \Omega(n)$. This can be
done by an extension of \cite[Lemma 3.14]{janson} as shown in \cite{master}. A rough sketch is as follows: We condition on all nodes but those included in fringe trees of size 5 (``5-fringes'') and furthermore on if the roots of the 5-fringes are essential. This already fixes the essentiality of all nodes outside the 5-fringes. 5-fringes with non-essential root can either have 5 or 6 essential nodes, so the conditioned variance is not zero. There are by \Cref{sec:fsize} $\Theta(n)$ many 5-fringes, so we have $\Var\alpha(\ptrien) ≥ \Ek{\Phi_5(\ptrien)}\Ek*{
    \Var\kl{\alpha(\ptrie5)\given \varphi_\alpha(\ptrie5)}} =  \Theta(n)$.

The asymptotic expansions \eqref{indnum:expansiond0}, \eqref{indnum:expansiondne0} are the definitions of $\psi_{E,\alpha}$ from \eqref{defi:psix0} and \eqref{defi:psix} substituted into \eqref{zgs:exp}.
Calculating the mean and variance is hindered by the recursive nature of \eqref{defi:falpha}, but we can estimate the asymptotic mean to arbitrary precision by counting only essential nodes with small fringe trees.

We calculate $\alpha_n := \E[\varphi_α(\ptrien)]$ for $0≤n≤N$
with $N\in\NN$.
Then $\El{α(\ptrien)}$ can be bounded by
\begin{equation}
    0 ≤ \El{α(\ptrien)} - \pars[\bigg]{n + \sum_{k=2}^N \E[\varphi_α(\ptriek)] 
    \El{\Phi_k(\ptrien)}} ≤ \El{\Phi_{>N}(\ptrien)}.
\end{equation}

We can now use the simultaneous convergence of $\Phi_i$ and $\Phi_k, 2≤k≤N$ from \Cref{sec:fsize}.
For $\Phi_{>N}$ we have
$\Phi_{>N} = \Phi_{i} - \sum_{k=2}^N \Phi_k$ and the fact that the limits
of $\E[\Phi_k], k≥2$ sum up to the limit of $\E[\Phi_i]$ and obtain in the $d_p=0$ case:
\begin{align}
    0 ≤ \lim_{n\to\infty}\frac{\El{α(\ptrien)}}n - \pars[\bigg]{
        1 + \sum_{k=2}^N \frac{(1-\rho(k))\alpha_k}{k(k-1)H}
    }
    &≤ \sum_{k=N+1}^\infty \frac{1-\rho(k)}{k(k-1)H}
    \nonumber\\\label{alpha:bound}
    &≤ \frac{1}{NH}.
\end{align}
In the periodic $d_p>0$ case, the same can be done for each value of $f_{E,\alpha}^\ast$.

The values $\alpha_k$ can be calculated by conditioning on the first step; this gives a recursive equation. For the symmetric, binary case, this is given by
\begin{equation}
    \alpha_n = \prod_{k=1}^{n-1}\binom nk 
        \frac{(1-\alpha_k)(1-\alpha_{n-k})}{2^n-2}
\end{equation}
for $n≥2$ and $\alpha_1 = 1, \alpha_0=0$. See \Cref{fig:alpha} for
a plot of the first values of $\alpha$.
Calculating these values up to $N=800$ (higher values gave overflows
in double-precision floats), one obtains from \eqref{alpha:bound} the bounds
\begin{equation}
    0.60225 < \frac 12+ \frac1{2H} f_{E,\alpha}^\ast(-1) < 0.60316
\end{equation}
for the asymptotic mean of the proportion of essential nodes.
\end{proof}

\begin{figure}
\includegraphics[width=0.49\textwidth]{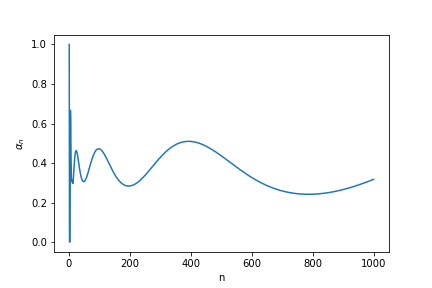}
\includegraphics[width=0.49\textwidth]{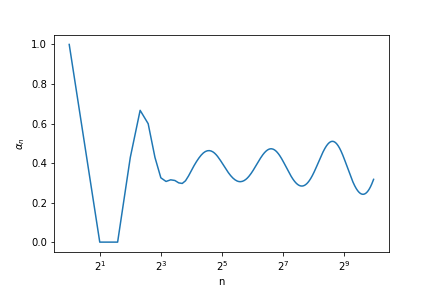}
\caption{The first values of $\alpha_n:=\E\varphi_\alpha(\ptrien)$ in the symmetric, binary case, on
a normal and a logarithmic $x$ scale, showing the oscillations.}
\label{fig:alpha}
\end{figure}

\bibliographystyle{plain}
\bibliography{master}
    
\end{document}